\newtheorem{theorem}{Theorem}[section]
\newtheorem{lemma}[theorem]{Lemma}
\theoremstyle{definition}
\newtheorem{definition}[theorem]{Definition}
\theoremstyle{remark}
\newtheorem{remark}[theorem]{Remark}
\theoremstyle{question}
\newtheorem{question}[theorem]{Question}
\numberwithin{equation}{section}
\begin{document}

\title[One-point connectifications]{One-point connectifications}

\author{M.R. Koushesh}

\address{Department of Mathematical Sciences, Isfahan University of Technology, Isfahan 84156--83111, Iran}
\address{School of Mathematics, Institute for Research in Fundamental Sciences (IPM), P.O. Box: 19395--5746, Tehran, Iran}
\email{koushesh@cc.iut.ac.ir}
\thanks{This research was in part supported by a grant from IPM (No. 92030418).}

\subjclass[2010]{Primary 54D35, 54D05, 54D40; Secondary 54B15}

%\date{October 17, 2011 and, in revised form, .}

\keywords{Stone--\v{C}ech compactification; Wallman compactification; One-point extension; One-point connectification; One-point compactification; Locally connected.}

\begin{abstract}
A space $Y$ is called an \textit{extension} of a space $X$ if $Y$ contains $X$ as a dense subspace. An extension $Y$ of $X$ is called a \textit{one-point extension} if $Y\setminus X$ is a singleton. Compact extensions are called \textit{compactifications} and connected extensions are called \textit{connectifications}.

It is well known that every locally compact non-compact space has a one-point compactification (known as the \textit{Alexandroff compactification}) obtained by adding a point at infinity. A locally connected disconnected space, however, may fail to have a one-point connectification. It is indeed a long standing question of Alexandroff to characterize spaces which have a one-point connectification. Here we prove that in the class of completely regular spaces, a locally connected space has a one-point connectification if and only if it contains no compact component.
\end{abstract}

\maketitle

\section{Introduction}

Throughout this article by \emph{completely regular} we mean completely regular and Hausdorff (also referred to as Tychonoff).

A space $Y$ is called an \emph{extension} of a space $X$ if $Y$ contains $X$ as a dense subspace. An extension $Y$ of $X$ is called a \emph{one-point extension} if $Y\setminus X$ is a singleton. Compact extensions are called \emph{compactifications} and connected extensions are called \emph{connectifications}.

It is well known that every locally compact non-compact space has a one-point compactification, known as the \emph{Alexandroff compactification}. (See \cite{A}.) A locally connected disconnected space, however, may fail to have a one-point connectification; trivially, any space with a compact open subspace has no Hausdorff connectification. (The lack of compact open subspaces, however, does not guarantee the existence of a connectification; see \cite{ATTW}, \cite{GKLD}, \cite{PW} and \cite{WW}.) There is indeed an old question of Alexandroff of characterizing spaces which have a one-point connectification. This so far has motivated a significant amount of research. The earliest serious work in this direction dates back perhaps to 1945 and is due to B. Knaster in \cite{K}; it presents a characterization of separable metrizable spaces which have a separable metrizable one-point connectification. Knaster's characterization is as follows.

\begin{theorem}[Knaster; \cite{K}]\label{LORE}
Let $X$ be a separable metrizable space. Then $X$ has a separable metrizable one-point connectification if and only if it can be embedded in a connected separable metrizable space as a proper open subspace.
\end{theorem}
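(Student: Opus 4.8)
The plan is to establish the two implications separately; the forward direction is essentially immediate, while the converse carries all of the substance. For necessity, suppose $X$ admits a separable metrizable one-point connectification $Y$, and write $Y\setminus X=\{p\}$. Since $Y$ is metrizable it is $T_1$, so $\{p\}$ is closed and therefore $X=Y\setminus\{p\}$ is open in $Y$; as $p\notin X$ this open subspace is proper, and $Y$ is by hypothesis connected, separable and metrizable. Thus $Y$ itself realizes $X$ as a proper open subspace of a connected separable metrizable space, and there is nothing more to prove.

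For sufficiency, I would start from an embedding of $X$ as a proper open subspace of a connected separable metrizable space $Z$ (one may assume $X\neq\emptyset$), and set $F=Z\setminus X$, a nonempty closed set. First I would record that $F$ is not open, since otherwise $X$ and $F$ would split the connected space $Z$ into two nonempty clopen pieces. The idea is then to collapse $F$ to a single new point $p$ and take $Y=X\cup\{p\}$. One cannot simply use the quotient topology of $Z/F$, which need not even be first countable at the collapsed point (already $\mathbb{R}/\mathbb{Z}$ fails this), so instead I would topologize $Y$ by an explicit metric. Fixing a compatible metric $d$ on $Z$ and writing $d(x,F)=\inf_{f\in F}d(x,f)$, I would define
\[
\rho(x,x')=\min\bigl\{d(x,x'),\,d(x,F)+d(x',F)\bigr\},\qquad \rho(x,p)=d(x,F),\qquad \rho(p,p)=0
\]
for $x,x'\in X$.

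It then remains to run several verifications, each routine. That $\rho$ is a metric follows from $d(x,F)>0$ for every $x\in X$ (as $F$ is closed and $x\notin F$) together with a case check of the triangle inequality using $d(a,F)\le d(a,b)+d(b,F)$. For $x\in X$ and $\epsilon<d(x,F)$ the $\rho$-ball and the $d$-ball of radius $\epsilon$ about $x$ coincide, so $\rho$ induces the original topology on $X$. Density of $X$ in $(Y,\rho)$ follows because $F$ is not open, whence $\partial F=F\cap\overline{X}$ is nonempty and any point of $\partial F$ is a limit of points $x_n\in X$ with $\rho(x_n,p)=d(x_n,F)\to 0$; separability is then inherited from $X$. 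The crucial step is connectedness, which I would obtain by checking that the collapsing map $q\colon Z\to(Y,\rho)$, identity on $X$ and sending $F$ to $p$, is continuous: for $z\in X$ and $f\in F$ one has $\rho(q(z),p)=d(z,F)\le d(z,f)$, so $q$ is continuous at each point of $F$ and evidently on $X$, making $(Y,\rho)$ a continuous surjective image of the connected space $Z$.

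The one genuine obstacle is metrizability of the collapsed space, and the entire point of the argument is to sidestep the quotient topology — which really can fail to be metrizable — by verifying by hand that the concrete metric $\rho$ furnishes a topology (coarser than the quotient topology, yet still Hausdorff and connected) under which $Y$ is a separable metrizable one-point connectification of $X$.
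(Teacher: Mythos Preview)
The paper does not prove this theorem; it is quoted from Knaster's 1945 paper \cite{K} purely as historical background, with no argument supplied. So there is nothing in the paper to compare your proof against.

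That said, your argument is correct. The forward direction is as immediate as you say. For the converse, the explicit metric $\rho$ is a standard and effective device: it sidesteps the genuine failure of metrizability of the quotient topology (your $\mathbb{R}/\mathbb{Z}$ example is apt), and your verifications---that $\rho$ is a metric, that it restricts to the original topology on $X$, that $X$ is dense, and that the collapsing map $q\colon Z\to(Y,\rho)$ is continuous and hence $Y$ is connected---all go through exactly as you outline. One small remark: the equivalence in the theorem actually fails for $X=\emptyset$ (the empty set embeds as a proper open subspace of a point, yet is not dense in any nonempty space and so admits no one-point connectification), so the nonemptiness assumption you insert is needed for the \emph{statement}, not merely for your proof.
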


More recently, in \cite{ADvM}, M. Abry, J.J. Dijkstra and J. van Mill have given the following alternative characterization of separable metrizable spaces which have a separable metrizable one-point connectification.

\begin{theorem}[Abry, Dijkstra and van Mill; \cite{ADvM}]\label{LOIE}
Let $X$ be a separable metrizable space in which every component is open. Then $X$ has a separable metrizable one-point connectification if and only if $X$ has no compact component.
\end{theorem}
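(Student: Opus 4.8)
The plan is to prove the two implications separately, with the forward (necessity) direction being a short clopen-set argument and the reverse (sufficiency) direction requiring an explicit metric construction inside the separable metrizable category. For necessity, suppose $Y=X\cup\{p\}$ is a separable metrizable one-point connectification and, toward a contradiction, that $C$ is a compact component of $X$. Since every component of $X$ is open, $C$ is open in $X$; since $Y$ is metrizable the singleton $\{p\}$ is closed, so $X=Y\setminus\{p\}$ is open in $Y$ and hence $C$ is open in $Y$. On the other hand $C$, being compact in the Hausdorff space $Y$, is closed in $Y$. Thus $C$ is a nonempty clopen subset of $Y$ with $C\neq Y$ (because $p\notin C$), contradicting connectedness of $Y$. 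So no component of $X$ is compact.

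For sufficiency assume no component of $X$ is compact. Because every component is open (hence clopen) and $X$ is separable metrizable, $X$ is the topological sum $\bigsqcup_i C_i$ of its components, each $C_i$ being connected, separable metrizable and non-compact, with the index set countable (separability forbids uncountably many disjoint nonempty open sets). I would fix on each $C_i$ a metric $d_i\le 1$. The heart of the argument is a single continuous \emph{height} function $h:X\to(0,1]$ whose restriction $f_i=h|_{C_i}$ satisfies $\inf_{C_i}f_i=0$ for every $i$; defining $h$ separately on each open piece $C_i$ automatically produces a continuous function on $X$. Given such an $h$, I would set $Y=X\cup\{p\}$ with the metric
\[
D(x,y)=
\begin{cases}
\min\{d_i(x,y),\,h(x)+h(y)\} & \text{if } x,y\in C_i,\\
h(x)+h(y) & \text{if } x\in C_i,\ y\in C_j,\ i\neq j,\\
h(x) & \text{if } x\in X,\ y=p,\\
0 & \text{if } x=y=p,
\end{cases}
\]
which is exactly the shortest-path metric of the graph obtained by joining every $x\in X$ to the apex $p$ by an edge of length $h(x)$. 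Being a shortest-path metric it automatically satisfies the triangle inequality, and $D$ separates points since $d_i$ and $h$ are strictly positive off the diagonal.

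It then remains to check that $(Y,D)$ is the desired connectification, which is where the routine-but-delicate verification lies. First, $D$ induces the original topology on $X$: at a fixed $x_0\in C_i$ one has $h(x_0)>0$, so for $y$ sufficiently $D$-close to $x_0$ the minimum in the first case is realized by $d_i(x_0,y)$ and $D$ agrees locally with $d_i$; moreover the $D$-ball of radius $h(x_0)$ about $x_0$ meets no other component, so each $C_i$ is open and the subspace topology on $X$ is the original topological-sum topology. Second, $B_D(p,\varepsilon)=\{p\}\cup h^{-1}((0,\varepsilon))$, and since $\inf_{C_i}h=0$ this ball meets every $C_i$; hence $p\in\overline{C_i}$ for all $i$, so each $C_i\cup\{p\}$ is connected and $Y=\bigcup_i(C_i\cup\{p\})$ is connected as a union of connected sets sharing $p$. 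Finally $Y$ is separable metrizable (one point added to a separable metric space) and $Y\setminus X=\{p\}$, with $X$ dense because $p$ is a limit of points of $X$; this also exhibits $X$ as a proper open subspace of a connected separable metrizable space, in agreement with Theorem \ref{LORE}.

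The one genuinely non-formal ingredient, which I would isolate as a lemma, is the existence of the height function $h$, equivalently of a continuous $f_i:C_i\to(0,1]$ with $\inf f_i=0$ on each non-compact $C_i$. Here I would use that a metrizable space is compact if and only if it is pseudocompact: since $C_i$ is metrizable and non-compact it carries an unbounded continuous real function (alternatively, take an infinite closed discrete subset and extend an unbounded function by Tietze), which after replacing it by $|g_i|+1$ may be taken to be a continuous $g_i:C_i\to[1,\infty)$ with $\sup g_i=\infty$; then $f_i=1/g_i$ is continuous, takes values in $(0,1]$, and has $\inf f_i=1/\sup g_i=0$. The main obstacle is therefore not any single hard estimate but arranging the glued metric $D$ to simultaneously reproduce the original topology of $X$ and drive $p$ into the closure of every component: the positivity of $h$ everywhere (keeping $X$ open with its topology intact) together with $\inf_{C_i}h=0$ (forcing connectedness) is precisely the tension that both hypotheses — every component open, no compact component — are needed to resolve.
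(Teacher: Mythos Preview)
The paper does not give its own proof of this theorem; it is quoted from \cite{ADvM} as background. (The paper's Theorem~\ref{LRE} establishes the analogous statement for completely regular spaces via the Stone--\v{C}ech compactification, but that construction does not yield a metrizable extension, so it does not subsume the result you are asked to prove.) Your necessity argument is correct and coincides verbatim with the necessity half of the paper's proof of Theorem~\ref{LRE}.

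Your sufficiency argument has a genuine gap: the function $D$ need not satisfy the triangle inequality. The assertion that $D$ is ``exactly the shortest-path metric'' of the cone graph is false unless each $h|_{C_i}$ is $1$-Lipschitz with respect to $d_i$; in the true shortest-path metric the distance from $x\in C_i$ to the apex is $\inf_{z\in C_i}\bigl(d_i(x,z)+h(z)\bigr)$, not $h(x)$, because one may first move inside $C_i$ to a point of small height and only then jump to $p$. Concretely, take $C_i=\mathbb{R}$ with $d_i(x,y)=\min(|x-y|,1)$ and any continuous $h$ with values in $(0,1]$ satisfying $h(0)=1$ and $h(1/10)=1/100$: then $D(0,p)=1$ while $D(0,1/10)+D(1/10,p)\le 1/10+1/100<1$. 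The repair is to replace $h$ on each $C_i$ by $\tilde h(x)=\inf_{z\in C_i}\bigl(d_i(x,z)+h(z)\bigr)$, which is $1$-Lipschitz in $d_i$, still strictly positive (if $\tilde h(x)=0$ choose $z_n$ with $d_i(x,z_n)\to 0$ and $h(z_n)\to 0$, contradicting continuity of $h$ at $x$), and still has infimum $0$ on each $C_i$ since $\tilde h\le h$. With that adjustment your formula for $D$ genuinely is the path metric, and the remaining verifications go through as you wrote them.
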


Here, we characterize locally connected completely regular spaces which have a completely regular one-point connectification. Our characterization resembles in form to that of M. Abry, J.J. Dijkstra and J. van Mill stated in Theorem \ref{LOIE} and, as we will now explain, may be viewed as a dual to Alexandroff's characterization of locally compact Hausdorff spaces having a Hausdorff one-point compactification. Observe that locally compact Hausdorff spaces as well as compact Hausdorff spaces are completely regular. The Alexandroff theorem, now reworded, states that a locally compact completely regular space has a completely regular one-point compactification if and only if it is non-compact. Keeping analogy, we prove that a locally connected completely regular space has a completely regular one-point connectification if and only if it contains no compact component. Our method may also be used to give a description of all completely regular one-point connectifications of a locally connected completely regular space with no compact component. Further, for a locally connected completely regular space with no compact component, we give conditions on a topological property ${\mathscr P}$ which guarantee the space to have a completely regular one-point connectification with ${\mathscr P}$, provided that each component of the space has ${\mathscr P}$. This will conclude Section \ref{HGF}. In Section \ref{KJG}, we will be dealing with $T_1$-spaces. Results of this section are dual to those we proved in Section \ref{HGF} rephrased in the context of $T_1$-spaces. In particular, we will prove that a locally connected $T_1$-space has a $T_1$ one-point connectification if it contains no compact component.

We will use some basic facts from the theory of the Stone--\v{C}ech compactification. Recall that the \emph{Stone--\v{C}ech compactification} of a completely regular space $X$, denoted by $\beta X$, is the Hausdorff compactification of $X$ which is characterized among all Hausdorff  compactifications of $X$ by the fact that every continuous mapping $f:X\rightarrow [0,1]$ is continuously extendable over $\beta X$. The Stone--\v{C}ech compactification of a completely regular space always exists. We will use the following standard properties of $\beta X$. (By a \emph{clopen} subspace we mean a simultaneously closed and open subspace.)
\begin{itemize}
  \item A clopen subspace of $X$ has open closure in $\beta X$.
  \item Disjoint zero-sets in $X$ have disjoint closures in $\beta X$.
  \item $\beta T=\beta X$ whenever $X\subseteq T\subseteq\beta X$.
\end{itemize}
For more information on the subject and other background material we refer the reader to the texts \cite{GJ} and \cite{PW1}.

\section{One-point connectifications of completely regular spaces}\label{HGF}

The following subspace of $\beta X$ plays a crucial role throughout our whole discussion.

\begin{definition}\label{JAS}
Let $X$ be a completely regular space. Define
\[\delta X=\bigcup\{\mathrm{cl}_{\beta X}C:C\mbox{ is a component of }X \},\]
considered as a subspace of $\beta X$.
\end{definition}

Recall that a space $X$ is called \emph{locally connected} if for every $x$ in $X$, every neighborhood of $x$ in $X$ contains a connected neighborhood of $x$ in $X$. Every component of a locally connected space is open and thus is clopen, as components are always closed. Observe that any clopen subspace of a completely regular space $X$ has open closure in $\beta X$. Therefore, in a locally connected completely regular space $X$ each component of $X$ has open closure in $\beta X$, in particular, $\delta X$ is open in $\beta X$.

The following theorem characterizes locally connected completely regular spaces which have a completely regular one-point connectification.

\begin{theorem}\label{LRE}
A locally connected completely regular space has a completely regular one-point connectification if and only if it contains no compact component.
\end{theorem}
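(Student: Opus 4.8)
The plan is to prove necessity by a short clopen-set argument, and sufficiency by exhibiting the desired connectification as a subspace of an explicit quotient of $\beta X$. For necessity, suppose $Y=X\cup\{p\}$ is a completely regular one-point connectification of a locally connected space $X$, and let $C$ be a compact component of $X$. Since $Y$ is $T_1$, the set $X=Y\setminus\{p\}$ is open in $Y$; since $X$ is locally connected, $C$ is open in $X$, hence open in $Y$; and since $Y$ is Hausdorff, $C$ is closed in $Y$. Thus $C$ is a nonempty proper clopen subset of the connected space $Y$, a contradiction. So $X$ has no compact component.

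For sufficiency, assume $X$ is locally connected and completely regular with no compact component (and $X\neq\emptyset$). Let $\{C_i\}_{i\in I}$ be the components of $X$. Each $C_i$ is clopen in $X$, so $\mathrm{cl}_{\beta X}C_i$ is open in $\beta X$; it is compact (being closed in $\beta X$) and connected; distinct $C_i$ are disjoint clopen sets and hence have disjoint closures in $\beta X$; and $\mathrm{cl}_{\beta X}C_i\neq C_i$, since otherwise $C_i$ would be closed in $\beta X$, hence compact. Fix $q_i\in\mathrm{cl}_{\beta X}C_i\setminus C_i$ for each $i$ and put $D=\mathrm{cl}_{\beta X}\{q_i:i\in I\}$. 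Using that $\mathrm{cl}_{\beta X}C_i$ is open in $\beta X$, meets $\{q_i:i\in I\}$ only in $q_i$, and satisfies $\mathrm{cl}_{\beta X}C_i\cap X=C_i$ (as $C_i$ is closed in $X$), one checks that $D\cap X=\emptyset$: a limit point of $\{q_i:i\in I\}$ lying in some $C_j$ would have to be a limit point of the singleton $\{q_j\}$, i.e.\ equal to $q_j\notin X$.

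Since $D$ is closed in the compact Hausdorff space $\beta X$, the relation $\Delta_{\beta X}\cup(D\times D)$ is closed, so the quotient $K=\beta X/D$ obtained by collapsing $D$ to a single point $p$ is compact Hausdorff, and the quotient map $\pi\colon\beta X\to K$ restricts to a homeomorphism of the saturated open set $\beta X\setminus D$ onto $K\setminus\{p\}$; in particular $\pi$ embeds $X$ into $K$. Identify $X$ with $\pi(X)$ and set $Y=X\cup\{p\}\subseteq K$. Then $Y$ is completely regular (a subspace of a compact Hausdorff space), $X$ is dense in $Y$ (as the image of the dense set $X\subseteq\beta X$), and $Y\setminus X=\{p\}$ (as $\pi^{-1}(p)=D$ is disjoint from $X$); so $Y$ is a one-point extension of $X$. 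To see that $Y$ is connected I would first record the criterion: if $X$ is locally connected and $Z=X\cup\{p\}$ is a space in which $X$ is open and dense and $\{p\}$ is closed, then $Z$ is connected if and only if $p\in\mathrm{cl}_Z C$ for every component $C$ of $X$; the proof uses that, in a locally connected space, every clopen set is a union of components, so a nonempty proper clopen subset of $Z$ containing $p$ would have complement a union of components whose $Z$-closures all contain $p$, and similarly if it avoids $p$. Now $q_i\in\mathrm{cl}_{\beta X}C_i$ and $q_i\in D$, so $p=\pi(q_i)\in\mathrm{cl}_K\pi(C_i)=\mathrm{cl}_K C_i$, whence $p\in\mathrm{cl}_Y C_i$ for every $i$; by the criterion $Y$ is connected, completing the proof.

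The main obstacle is choosing the right subset of $\beta X$ to collapse. Collapsing all of $\delta X\setminus X$ (or $\beta X\setminus X$) destroys the embedding of $X$; collapsing a single $q_i$ fails to connect the remaining components; and collapsing $\{q_i:i\in I\}$ itself need not yield a Hausdorff quotient. It is exactly its closure $D$ that is simultaneously closed in $\beta X$ (so the quotient is compact Hausdorff), disjoint from $X$ (so $X$ remains embedded and only one point is added), and intersects $\mathrm{cl}_{\beta X}C_i$ for every $i$ (so the result is connected). The secondary point needing care is the connectedness criterion, i.e.\ the fact that in a locally connected space every clopen subspace is a union of components.
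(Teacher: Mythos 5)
Your proof is correct and follows essentially the same strategy as the paper: both establish necessity by observing that a compact component would be a proper clopen subset of the connectification, and both establish sufficiency by collapsing a compact subset of $\beta X\setminus X$ that meets $\mathrm{cl}_{\beta X}C$ for every component $C$, then taking $Y=X\cup\{p\}$ inside the quotient. The only (immaterial) differences are that the paper collapses $\{t_C:C\}\cup(\beta X\setminus\delta X)$ where you collapse $\mathrm{cl}_{\beta X}\{q_i:i\in I\}$ --- the same disjoint-closures computation shows either set is compact and misses $X$ --- and that the paper verifies connectedness directly by writing $Y$ as a union of the connected sets $\mathrm{cl}_Y C$ sharing the point $p$, which is slightly more economical than your clopen-set criterion.
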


\begin{proof}
Let $X$ be a (non-empty) locally connected completely regular space.

\textit{Sufficiency.} Suppose that $X$ contains no compact component. We show that $X$ has a completely regular one-point connectification. Let $C$ be a component of $X$. Then $\mathrm{cl}_{\beta X}C\setminus X$ is non-empty, as $C$ is non-compact. Choose an element $t_C$ in $\mathrm{cl}_{\beta X}C\setminus X$. Let
\[P=\{t_C:C\mbox{ is a component of }X \}\cup(\beta X\setminus\delta X).\]
Note that $P$ misses $X$, as $\beta X\setminus\delta X$ does so, since $X$ is contained in $\delta X$ trivially. Also, $P$ is non-empty, as $X$ is so. We show that $P$ is closed in $\beta X$. Let $t$ be in $\mathrm{cl}_{\beta X}P$. Obviously, $t$ is contained in $P$ if it is contained in $\beta X\setminus\delta X$. Let $t$ be in $\delta X$. Then $t$ is contained in $\mathrm{cl}_{\beta X}D$ for some component $D$ of $X$. We show that $t$ is identical to $t_D$. Suppose otherwise. Then
\[U=\mathrm{cl}_{\beta X}D\setminus\{t_D\}\]
is an open neighborhood of $t$ in $\beta X$. (Observe that the closure in $\beta X$ of $D$ is open in $\beta X$, as $D$ is a component of $X$ and $X$ is locally connected.) We show that $U$ misses $P$. Let $E$ be a component of $X$ distinct from $D$. Then $E$ is necessarily disjoint from $D$. This implies that $E$ and $D$ have disjoint closures in $\beta X$, as they are disjoint zero-sets (indeed, disjoint clopen subspaces) in $X$. Therefore $t_E$ is not in $U$, as it is contained in $\mathrm{cl}_{\beta X}E$. It is trivial that $U$ misses $\beta X\setminus\delta X$. Thus $U$ misses $P$, which is a contradiction. This shows that $P$ is closed in $\beta X$.

Let $T$ be the space which is obtained from $\beta X$ by contracting the compact subspace $P$ of $\beta X$ to a point $p$ and let $\phi:\beta X\rightarrow T$ denote the corresponding quotient mapping. Consider the subspace $Y=X\cup\{p\}$ of $T$. Then $Y$ is completely regular, as $T$ is so, and contains $X$ densely, as $T$ does so. That is, $Y$ is a completely regular one-point extension of $X$. We verify that $Y$ is connected. Note that $p$ is contained in $\mathrm{cl}_Y C$ for every component $C$ of $X$, as
\[p=\phi(t_C)\in\phi(\mathrm{cl}_{\beta X}C)\subseteq\mathrm{cl}_T\phi(C)=\mathrm{cl}_TC.\]
Since $\mathrm{cl}_Y C$ is the closure of a connected space, it is connected for every component $C$ of $X$. Therefore
\[Y=\bigcup\{\mathrm{cl}_Y C:C\mbox{ is a component of }X\}\]
is connected, as it is the union of a collection of connected subspaces of $Y$ with non-empty intersection.

\textit{Necessity.} Suppose that $X$ has a completely regular one-point connectification $Y$. We show that no component of $X$ is compact. Suppose otherwise. Then $X$ contains a compact component $C$. Trivially, $C$ is closed in $Y$, as $Y$ is Hausdorff. On the other hand $C$ is open in $Y$, as $C$ is open in $X$, since $X$ is locally connected, and $X$ is open in $Y$. That is, $C$ is clopen in $Y$. Since $Y$ is connected we then have $C=Y$, which is a contradiction.
\end{proof}

\begin{remark}\label{JKAS}
Theorem \ref{LRE} is valid if we replace locally connectedness of $X$ by the requirement that every component of $X$ is open; this follows trivially by an inspection of the proof.
\end{remark}

The method used in the proof of Theorem \ref{LRE} can be modified to give a description of all completely regular one-point connectifications of a locally connected completely regular space $X$ with no compact component; this will be the context of our next theorem.

The following lemma follows from a very standard argument; we therefore omit the proof.

\begin{lemma}\label{JGFD}
Let $Y=X\cup\{p\}$ be a completely regular one-point extension of a space $X$. Let $\phi:\beta X\rightarrow\beta Y$ be the continuous extension of the identity mapping on $X$. Then $\beta Y$ is the quotient space obtained from $\beta X$ by contracting $\phi^{-1}(p)$ to $p$ and $\phi$ is its quotient mapping.
\end{lemma}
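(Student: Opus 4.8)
The plan is to avoid analysing the fibres of $\phi$ directly and instead to \emph{construct} the quotient and then recognise it as $\beta Y$ by means of the characterising extension property of the Stone--\v{C}ech compactification. First, some preliminaries: since $Y$ is completely regular (hence Hausdorff), $\{p\}$ is closed in $Y$, so $X=Y\setminus\{p\}$ is open in $Y$; also $X$ is dense in $\beta Y$, so the universal property of $\beta X$ yields a continuous map $\phi:\beta X\rightarrow\beta Y$ with $\phi|_X=\mathrm{id}_X$, and this $\phi$ is a closed surjection since $\beta X$ is compact and $\beta Y$ is Hausdorff. Note that $\phi^{-1}(p)$ is closed in $\beta X$ and misses $X$. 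Let $Q$ be the space obtained from $\beta X$ by contracting $\phi^{-1}(p)$ to a point $*$, and let $q:\beta X\rightarrow Q$ be the quotient mapping. Then $Q$ is compact and Hausdorff (the quotient of a compact Hausdorff space obtained by contracting a closed subspace to a point is again compact and Hausdorff, the latter by normality), so $q$ is a closed mapping, and the restriction of $q$ to the saturated open subspace $\beta X\setminus\phi^{-1}(p)$ is a homeomorphism onto $Q\setminus\{*\}$; in particular $q|_X$ is an embedding.

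The core of the argument is to show that $Q$ is a Hausdorff compactification of $Y$. Define $e:Y\rightarrow Q$ by letting $e|_X=q|_X$ and $e(p)=*$; this is an injection, and its image contains the dense subspace $q(X)$. For the continuity of $e$ at $p$: given an open subspace $W$ of $Q$ with $*\in W$, the set $G=q^{-1}(Q\setminus W)$ is closed in $\beta X$ and misses $\phi^{-1}(p)$, so $\phi(G)$ is closed in $\beta Y$ and misses $p$ (this uses that $\phi$ is closed); since $X\setminus q^{-1}(W)=X\cap G=\phi(X\cap G)\subseteq\phi(G)$, it follows that $p\notin\mathrm{cl}_{\beta Y}(X\setminus q^{-1}(W))$, which yields a neighbourhood of $p$ in $Y$ that $e$ maps into $W$. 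To see that $e$ is an embedding, one checks that $e(V)$ is open in $e(Y)$ for every open subspace $V$ of $Y$: if $p\notin V$ this follows from $q|_X$ being an embedding together with the fact that $e(X)=(Q\setminus\{*\})\cap e(Y)$ is open in $e(Y)$; if $p\in V$, write $V=Y\setminus F$ with $F$ closed in $Y$ (so $F\subseteq X$ and $p\notin\mathrm{cl}_{\beta Y}F$) and verify that $e(F)=q(\phi^{-1}(\mathrm{cl}_{\beta Y}F))\cap e(Y)$, which is closed in $e(Y)$ because $\phi^{-1}(\mathrm{cl}_{\beta Y}F)$ is a saturated closed subspace of $\beta X$ and $q$ is closed. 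Hence $Q$ is a Hausdorff compactification of $Y$.

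It remains to identify $Q$ with $\beta Y$. Let $f:Y\rightarrow[0,1]$ be continuous and let $\beta f:\beta Y\rightarrow[0,1]$ denote its continuous extension. Then $\beta f\circ\phi$ is a continuous mapping $\beta X\rightarrow[0,1]$ restricting to $f|_X$ on $X$, so it is the continuous extension of $f|_X$ over $\beta X$; evaluating at a point $a\in\phi^{-1}(p)$ gives the value $\beta f(p)=f(p)$, so this extension is constant on $\phi^{-1}(p)$, hence constant on every fibre of $q$, and therefore factors as $g\circ q$ for some continuous $g:Q\rightarrow[0,1]$; one checks that $g\circ e=f$. Thus every continuous mapping $Y\rightarrow[0,1]$ extends over the Hausdorff compactification $Q$ of $Y$, and therefore $Q=\beta Y$. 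Under this identification, $\phi$ and $q$ are continuous mappings from $\beta X$ into the Hausdorff space $\beta Y$ which agree on the dense subspace $X$, so $\phi=q$; that is, $\beta Y$ is the quotient space obtained from $\beta X$ by contracting $\phi^{-1}(p)$ to the point $p$, and $\phi$ is its quotient mapping. The step I expect to be the main obstacle is the second paragraph --- checking that collapsing \emph{exactly} $\phi^{-1}(p)$ reinstates $Y$ (and not merely $X$) as a dense subspace of the quotient, i.e.\ that $e$ is an embedding at $p$; this is precisely where the one-point hypothesis (which makes $X$ open in $Y$) and the closedness of $\phi$ enter.
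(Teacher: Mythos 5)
The paper deliberately omits a proof of this lemma (calling it ``very standard''), so there is nothing to match your argument against line by line; judged on its own terms, your proof is correct and complete. The usual direct route is to observe that $\phi$ is a perfect surjection (hence a quotient mapping) and then to verify that every fibre of $\phi$ other than $\phi^{-1}(p)$ is a singleton: fibres over points of $X$ are trivial by the standard fact that the natural map $\beta X\rightarrow\alpha X$ onto any compactification satisfies $\phi^{-1}[X]=X$, while fibres over points of $\beta Y\setminus Y$ require a small separation argument (roughly: if $t_1\neq t_2$ lie over $q\neq p$, pick a zero-set $A$ of $X$ with $t_1,t_2\in\mathrm{cl}_{\beta X}A$ and $p\notin\mathrm{cl}_YA$, separate $p$ from $A$ by some $u\in C(Y,[0,1])$, and multiply $u$ by a function distinguishing $t_1$ from $t_2$ to get a member of $C^*(Y)$ whose Stone extension takes two values at $q$). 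Your argument inverts this: you build the candidate quotient $Q=\beta X/\phi^{-1}(p)$, verify directly that $Y$ embeds densely in $Q$ (the continuity and openness of $e$ at $p$, which you correctly isolate as the crux and handle via the closedness of $\phi$ and the fact that $X$ is open in $Y$), and then invoke the extension property of $[0,1]$-valued functions to identify $Q$ with $\beta Y$; the triviality of the remaining fibres of $\phi$ then falls out of the identity $\phi=q$ rather than being proved up front. All the individual steps check out (including the set identity $e(F)=q(\phi^{-1}(\mathrm{cl}_{\beta Y}F))\cap e(Y)$, whose reverse inclusion uses $\mathrm{cl}_{\beta Y}F\cap Y=F$ and the injectivity of $q$ off $\phi^{-1}(p)$), and the only cosmetic omission is the remark that $\phi^{-1}(p)\neq\emptyset$, which is immediate from the surjectivity of $\phi$ that you already established. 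What your approach buys is that the fibre analysis over $\beta Y\setminus Y$ is obtained for free; what it costs is the somewhat longer verification that $e$ is an embedding.
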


The following describes for a locally connected completely regular space which has no compact component, all its completely regular one-point connectifications.

\begin{theorem}\label{JGF}
Let $X$ be a locally connected completely regular space with no compact component. Let $Y=X\cup\{p\}$ be the quotient space obtained by contracting a non-empty compact subspace of $\beta X\setminus X$ which intersects the closure in $\beta X$ of each component of $X$ to the point $p$. Then $Y$ is a completely regular one-point connectification of $X$. Further, any completely regular one-point connectification of $X$ is obtained in this way.
\end{theorem}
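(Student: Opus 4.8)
The plan is to prove the two halves of Theorem \ref{JGF} separately, treating the ``further'' part (that every completely regular one-point connectification arises this way) as the substantive direction. For the first half, suppose $P$ is a non-empty compact subspace of $\beta X\setminus X$ meeting $\mathrm{cl}_{\beta X}C$ for every component $C$ of $X$, let $\phi:\beta X\to T$ be the quotient map contracting $P$ to $p$, and set $Y=X\cup\{p\}\subseteq T$. As in the sufficiency part of Theorem \ref{LRE}, $T$ is completely regular (contracting a compact set in a compact Hausdorff space yields a compact Hausdorff space), hence so is the subspace $Y$; and $X$ is dense in $T$, hence in $Y$. Since $P\subseteq\beta X\setminus X$, we have $Y\setminus X=\{p\}$, so $Y$ is a completely regular one-point extension. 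For connectedness I would repeat the argument verbatim: picking for each component $C$ a point $t_C\in P\cap\mathrm{cl}_{\beta X}C$, one gets $p=\phi(t_C)\in\mathrm{cl}_T C$, so $p\in\mathrm{cl}_Y C$, whence each $\mathrm{cl}_Y C$ is connected and they all contain $p$, so their union $Y$ is connected. (Here I use that $X$ locally connected implies $X=\bigcup C$, so $Y=\bigcup\mathrm{cl}_Y C$.)

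For the converse, let $Y=X\cup\{p\}$ be an arbitrary completely regular one-point connectification of $X$, and let $\phi:\beta X\to\beta Y$ be the continuous extension of $\mathrm{id}_X$. By Lemma \ref{JGFD}, $\beta Y$ is obtained from $\beta X$ by contracting $P:=\phi^{-1}(p)$ to $p$, with $\phi$ the quotient map. So it suffices to show: (i) $P$ is a non-empty compact subset of $\beta X\setminus X$; (ii) $P$ meets $\mathrm{cl}_{\beta X}C$ for every component $C$ of $X$; and (iii) the subspace $X\cup\{p\}$ of $\beta Y$ is (homeomorphic to) $Y$. Compactness of $P$ is clear since $P=\phi^{-1}(p)$ is closed in the compact space $\beta X$; that $P\cap X=\emptyset$ holds because $\phi|_X=\mathrm{id}_X$ and $p\notin X$; and $P\neq\emptyset$ because $\phi$ is onto (its image is a closed set containing the dense set $X$). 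For (iii): $X$ is dense in $Y$, hence $\beta Y$ is a compactification of $Y$ as well, so $Y\subseteq\beta Y$ and the subspace topology on $Y$ inherited from $\beta Y$ is its original topology; since $Y=X\cup\{p\}$ as a subset of $\beta Y$, this is exactly the asserted quotient description of $Y$.

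The main obstacle is step (ii): showing $\phi^{-1}(p)$ meets the $\beta X$-closure of every component $C$ of $X$. Equivalently, I must rule out that $\mathrm{cl}_{\beta X}C\subseteq\phi^{-1}(\beta Y\setminus\{p\})$, i.e. that $\mathrm{cl}_{\beta X}C$ maps into $\beta Y\setminus\{p\}$. Here is the idea. Since $X$ is locally connected, $C$ is clopen in $X$, hence $C$ is a zero-set in $X$ and $\mathrm{cl}_{\beta X}C$ is open in $\beta X$; moreover $\beta X\setminus\mathrm{cl}_{\beta X}C=\mathrm{cl}_{\beta X}(X\setminus C)$ since $X\setminus C$ is also clopen. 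Suppose toward a contradiction that $P\cap\mathrm{cl}_{\beta X}C=\emptyset$. Then $\phi$ maps the clopen-in-$\beta X$ set $\mathrm{cl}_{\beta X}C$ into $\beta Y\setminus\{p\}$ and the clopen set $\mathrm{cl}_{\beta X}(X\setminus C)$ onto a set whose closure, together with the image of the first, covers $\beta Y$; since $\phi$ is a closed surjection with $\phi^{-1}(p)=P$, one shows $\phi(\mathrm{cl}_{\beta X}C)$ and $\phi(\mathrm{cl}_{\beta X}(X\setminus C))$ are disjoint closed sets in $\beta Y$ whose union is $\beta Y$, hence both clopen, and their traces on $Y$ are clopen in $Y$; but $C=Y\cap\phi(\mathrm{cl}_{\beta X}C)$ is then a non-empty proper clopen subset of the connected space $Y$ (proper because $p$ lies in the other piece) — a contradiction. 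Making the disjointness and union claims precise uses that $P$ is saturated, that $C$ and $X\setminus C$ are completely separated in $X$ so $\mathrm{cl}_{\beta X}C\cap\mathrm{cl}_{\beta X}(X\setminus C)=\emptyset$, and that $\phi$ restricted to the complement of $P$ is injective onto $\beta Y\setminus\{p\}$; I would write this out carefully as it is the crux of the theorem.
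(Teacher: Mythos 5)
Your proposal is correct and follows essentially the same route as the paper: the first half reuses the connectedness argument of Theorem \ref{LRE}, and the converse invokes Lemma \ref{JGFD} and derives a contradiction by showing the offending component $C$ would be a non-empty proper clopen subset of the connected space $Y$. The only real difference is in your crux step (ii), where the paper gets closedness of $C$ in $Y$ in one line from $\mathrm{cl}_Y C\subseteq\mathrm{cl}_{\beta Y}C\subseteq\phi(\mathrm{cl}_{\beta X}C)\not\ni p$, whereas you pass through a clopen decomposition of $\beta Y$ into $\phi(\mathrm{cl}_{\beta X}C)$ and $\phi(\mathrm{cl}_{\beta X}(X\setminus C))$ --- your longer argument is valid, but the saturation/injectivity bookkeeping it requires is unnecessary.
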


\begin{proof}
Let $P$ be a non-empty compact subspace of $\beta X\setminus X$ which intersects the closure in $\beta X$ of every component of $X$. Let $T$ be the quotient space of $\beta X$ which is obtained by contracting $P$ to a point $p$. An argument similar to the one given in the proof of Theorem \ref{LRE} shows that the subspace $Y=X\cup\{p\}$ of $T$ is a completely regular one-point connectification of $X$.

To show the converse, let $Y=X\cup\{p\}$ be a completely regular one-point connectification of $X$. Let $\phi:\beta X\rightarrow\beta Y$ be the continuous extension of the identity mapping on $X$. It follows from Lemma \ref{JGFD} that $\beta Y$ is the quotient space obtained from $\beta X$ by contracting $\phi^{-1}(p)$ to $p$ and $\phi$ is its quotient mapping. We need to show that $\phi^{-1}(p)$ intersects the closure in $\beta X$ of each component of $X$. Let $C$ be a component of $X$. Suppose to the contrary that
\[\phi^{-1}(p)\cap\mathrm{cl}_{\beta X}C=\emptyset.\]
Then $p$ is not contained in $\phi(\mathrm{cl}_{\beta X}C)$, and since $\mathrm{cl}_{\beta Y}C\subseteq\phi(\mathrm{cl}_{\beta X}C)$, then $p$ is not contained in $\mathrm{cl}_YC$ either. Therefore $C$ is closed in $Y$, as it is closed in $X$. But $C$ is also open in $Y$, as it is open in $X$, since $X$ is locally connected (and $X$ is open in $Y$). This contradicts the connectedness of $Y$.
\end{proof}

In \cite{Kou} (also \cite{Kou2} and \cite{Kou5}--\cite{Kou10}) we have studied topological properties ${\mathscr P}$ such that any completely regular space which has ${\mathscr P}$ locally has a completely regular one-point extension which has ${\mathscr P}$. Motivated by this, we consider conditions on a topological property ${\mathscr P}$ which guarantee a locally connected completely regular space with no compact component to have a completely regular one-point connectification with ${\mathscr P}$, provided that all its components have ${\mathscr P}$.

We need the following definition.

\begin{definition}\label{KALS}
Let ${\mathscr P}$ be a topological property. Then
\begin{enumerate}
  \item ${\mathscr P}$ is \emph{closed hereditary} if any closed subspace of a space having ${\mathscr P}$, has ${\mathscr P}$.
  \item ${\mathscr P}$ is \emph{finitely additive} if any space which is expressible as a finite disjoint union of closed subspaces each having ${\mathscr P}$, has ${\mathscr P}$.
  \item ${\mathscr P}$ is \emph{co-local} if a space $X$ has ${\mathscr P}$ provided that it contains a point $p$ with an open base ${\mathscr B}$ for $X$ at $p$ such that $X\setminus B$ has ${\mathscr P}$ for any $B$ in ${\mathscr B}$.
\end{enumerate}
\end{definition}

\begin{remark}\label{ZKS}
The condition stated in (3) in Definition \ref{KALS} has been introduced by S. Mr\'{o}wka in \cite{Mr}, where it was called \emph{condition $(\mathrm{W})$}.
\end{remark}

\begin{remark}\label{KHG}
Some authors call a topological property ${\mathscr P}$ \emph{finitely additive} if any space which is a finite (and not necessarily disjoint) union of closed subspaces each having ${\mathscr P}$, has ${\mathscr P}$. The reader is warned of the difference between this definition and the definition given in Definition \ref{KALS}.
\end{remark}

\begin{theorem}\label{RFS}
Let $X$ be a locally connected completely regular space with no compact component. Let ${\mathscr P}$ be a closed hereditary finitely additive co-local topological property. If every component of $X$ has ${\mathscr P}$ (in particular, if $X$ has ${\mathscr P}$) then $X$ has a completely regular one-point connectification with ${\mathscr P}$.
\end{theorem}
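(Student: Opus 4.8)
The plan is to marry the explicit construction from the proof of Theorem~\ref{LRE} with the co-locality of $\mathscr P$. First I would run that construction: for each component $C$ of $X$ pick $t_C\in\mathrm{cl}_{\beta X}C\setminus X$ (possible since $C$ is non-compact), set $P=\{t_C:C\text{ a component of }X\}\cup(\beta X\setminus\delta X)$, which as shown there is a non-empty compact subset of $\beta X\setminus X$ meeting $\mathrm{cl}_{\beta X}C$ for every component $C$, let $T$ be the quotient of $\beta X$ obtained by contracting $P$ to a point $p$ with quotient map $\phi\colon\beta X\to T$, and let $Y=X\cup\{p\}$ be the corresponding subspace of $T$. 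By Theorem~\ref{JGF} (equivalently, by the argument in Theorem~\ref{LRE}), $Y$ is a completely regular one-point connectification of $X$, so it only remains to check that $Y$ has $\mathscr P$, and for that I would use co-locality at the point $p$.

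The observation that makes this work is that, since $P$ is closed in $\beta X$ and hence $\phi$ maps $\beta X\setminus P$ homeomorphically onto $T\setminus\{p\}$, the open neighbourhoods of $p$ in $Y$ are precisely the sets $(W\cap X)\cup\{p\}$ with $W$ open in $\beta X$ and $P\subseteq W$. For such a $W$ one has $Y\setminus\big((W\cap X)\cup\{p\}\big)=X\setminus W$, which is closed in $X$. The crucial point is that $X\setminus W$ meets only finitely many components of $X$: the set $K=\beta X\setminus W$ is compact, and since $\beta X\setminus\delta X\subseteq P\subseteq W$ it is contained in $\delta X$; but $\{\mathrm{cl}_{\beta X}C:C\text{ a component of }X\}$ is a pairwise disjoint family of open subsets of $\beta X$ (disjoint because distinct components are disjoint zero-sets in $X$, open because $X$ is locally connected) whose union is $\delta X\supseteq K$, so $K\subseteq\mathrm{cl}_{\beta X}C_1\cup\cdots\cup\mathrm{cl}_{\beta X}C_n$ for finitely many components $C_1,\dots,C_n$. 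By disjointness of the closures, every component $C$ other than $C_1,\dots,C_n$ satisfies $\mathrm{cl}_{\beta X}C\subseteq W$, hence $C\subseteq W$; therefore $X\setminus W\subseteq C_1\cup\cdots\cup C_n$.

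Now $X\setminus W=\bigcup_{i=1}^n\big((X\setminus W)\cap C_i\big)$ is a finite disjoint union in which each $(X\setminus W)\cap C_i$ is closed in $C_i$, hence has $\mathscr P$ by closed heredity (as $C_i$ has $\mathscr P$), and is also closed in $X\setminus W$; so finite additivity gives that $X\setminus W$ has $\mathscr P$. Taking $\mathscr B$ to be the collection of all open neighbourhoods of $p$ in $Y$ — trivially an open base for $Y$ at $p$ — we thus have that $Y\setminus B$ has $\mathscr P$ for every $B\in\mathscr B$, and co-locality of $\mathscr P$ yields that $Y$ has $\mathscr P$, which finishes the proof.

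The one step that genuinely needs care is the finiteness claim, and this is exactly why the particular $P$ from Theorem~\ref{LRE} is used rather than an arbitrary contractible compactum allowed by Theorem~\ref{JGF}: the inclusion $\beta X\setminus\delta X\subseteq P$ is what forces $K$ into $\delta X$, and without it $X\setminus W$ could meet infinitely many components, where finite additivity no longer applies. The remaining ingredients — the description of the neighbourhoods of $p$, and the routine bookkeeping about which sets are closed and which closures are disjoint — are straightforward and already implicit in the proof of Theorem~\ref{LRE}.
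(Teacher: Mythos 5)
Your proposal is correct and follows essentially the same route as the paper: run the construction of Theorem~\ref{LRE}, use co-locality at $p$, and use compactness of $\beta X\setminus W\subseteq\delta X$ together with openness of the sets $\mathrm{cl}_{\beta X}C$ to trap $X\setminus W$ in finitely many components, then combine closed heredity and finite additivity. The only (immaterial) difference is the order of the last two steps: the paper first applies finite additivity to $C_1\cup\cdots\cup C_n$ and then closed heredity to $X\setminus W$, whereas you apply closed heredity to each piece $(X\setminus W)\cap C_i$ first and then finite additivity.
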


\begin{proof}
Note that if $X$ has ${\mathscr P}$, then each of its components has ${\mathscr P}$, as ${\mathscr P}$ is closed hereditary. We may therefore prove the theorem in the case when every component of $X$ has ${\mathscr P}$.

Let $P$, $T$, $\phi$ and $Y$ be as defined in the proof of Theorem \ref{LRE}. Since ${\mathscr P}$ is co-local, to show that $Y$ has ${\mathscr P}$ it suffices to show that $Y\setminus U$ has ${\mathscr P}$ for any open neighborhood $U$ of $p$ in $Y$. Let $U$ be an open neighborhood of $p$ in $Y$ and let $U'$ be an open subspace of $T$ with $U=U'\cap Y$. Then
\[\beta X\setminus\delta X\subseteq\phi^{-1}(p)\subseteq\phi^{-1}(U'),\]
as $p$ is contained in $U'$, and thus
\[\beta X\setminus\phi^{-1}(U')\subseteq\delta X.\]
By compactness (and the definition of $\delta X$) it then follows that
\begin{equation}\label{JOB}
\beta X\setminus\phi^{-1}(U')\subseteq\mathrm{cl}_{\beta X}C_1\cup\cdots\cup\mathrm{cl}_{\beta X}C_n,
\end{equation}
where $C_i$ is a component of $X$ for each $i=1,\ldots,n$. Intersecting both sides of (\ref{JOB}) with $X$ gives
\[X\setminus U\subseteq C_1\cup\cdots\cup C_n=D.\]
Note that $D$ has ${\mathscr P}$, as it is a finite disjoint union of closed subspaces each with ${\mathscr P}$ and ${\mathscr P}$ is finitely additive. Thus
\[Y\setminus U=X\setminus U\]
has ${\mathscr P}$, as it is closed in $D$ and ${\mathscr P}$ is closed hereditary.
\end{proof}

\begin{remark}\label{FDF}
There is a long list of topological properties, mostly covering properties (topological properties described in terms of the existence of certain kinds of open subcovers or refinements of a given open cover of a certain type), satisfying the requirements of Theorem \ref{RFS}. Specifically, we mention the Lindel\"{o}f property, paracompactness, metacompactness, subparacompactness, the para-Lindel\"{o}f property, the $\sigma$-para-Lindel\"{o}f  property, weak $\theta$-refinability, $\theta$-refinability (or  submetacompactness), weak $\delta\theta$-refinability, and $\delta\theta$-refinability (or the submeta-Lindel\"{o}f property). (See Example 2.16 in \cite{Kou} for the proof and see \cite{Bu}, \cite{Steph} and \cite{Va} for the definitions.)
\end{remark}

\section{One-point connectifications of $T_1$-spaces}\label{KJG}

This section deals with one-point connectifications of $T_1$-spaces. The results of this section will be dual to those we have obtained in the previous section. We will make critical use of the Wallman compactification; this will replace the Stone--\v{C}ech compactification, as used in the previous section.

Recall that the \emph{Wallman compactification} of a $T_1$-space $X$, denoted by $wX$, is the $T_1$ compactification of $X$ with the property that every continuous mapping $f:X\rightarrow K$ of $X$ to a compact Hausdorff space $K$ is continuously extendable over $wX$. The Wallman compactification is the substitute of the Stone--\v{C}ech compactification which is defined for every $T_1$-space. The Wallman compactification of a $T_1$-space $X$ is Hausdorff if and only if $X$ is normal, and in this case, it coincides with the Stone--\v{C}ech compactification of $X$. The Wallman compactification has properties which are dual to those of the Stone--\v{C}ech compactification. In particular, a clopen subspace of a $T_1$-space $X$ has open closure in $wX$, and disjoint zero-sets in $X$ have disjoint closures in $wX$.

The next theorem is dual to Theorem \ref{LRE}.

\begin{theorem}\label{JKHG}
A locally connected $T_1$-space has a $T_1$ one-point connectification if it contains no compact component.
\end{theorem}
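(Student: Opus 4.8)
The plan is to run the sufficiency half of the proof of Theorem \ref{LRE} essentially verbatim, with the Wallman compactification $wX$ replacing the Stone--\v{C}ech compactification $\beta X$. This should work because the only features of $\beta X$ that argument used --- that a clopen subspace has open closure, and that distinct components (being disjoint clopen subspaces) have disjoint closures --- both carry over to $wX$: the first is recalled explicitly at the start of this section, and the second is an easy consequence of it, since if $D,E$ are disjoint clopen subspaces of $X$ then $\mathrm{cl}_{wX}D$ and $\mathrm{cl}_{wX}(X\setminus D)$ are open in $wX$, cover $wX$, and meet the dense set $X$ in the complementary sets $D$ and $X\setminus D$, hence are disjoint, and $\mathrm{cl}_{wX}E\subseteq\mathrm{cl}_{wX}(X\setminus D)$.

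Concretely, I would set
\[\delta X=\bigcup\{\mathrm{cl}_{wX}C:C\mbox{ is a component of }X\},\]
a subspace of $wX$; since $X$ is locally connected each component is clopen, so $\delta X$ is open in $wX$ and $X\subseteq\delta X$. For each component $C$ I would pick $t_C\in\mathrm{cl}_{wX}C\setminus X$ (non-empty, since $C$ is non-compact while $\mathrm{cl}_{wX}C$ is compact) and set
\[P=\{t_C:C\mbox{ is a component of }X\}\cup(wX\setminus\delta X),\]
which is non-empty and disjoint from $X$. The verification that $P$ is closed in $wX$ copies the one in Theorem \ref{LRE}: a point $t\in\mathrm{cl}_{wX}P$ lying in $\delta X$ belongs to $\mathrm{cl}_{wX}D$ for some component $D$, and if $t\neq t_D$ then $U=\mathrm{cl}_{wX}D\setminus\{t_D\}$ is an open neighborhood of $t$ in $wX$ missing $P$ --- for a component $E\neq D$ we have $\mathrm{cl}_{wX}E\cap\mathrm{cl}_{wX}D=\emptyset$ by the observation above, so $t_E\notin U$, and $U$ is trivially disjoint from $wX\setminus\delta X$ --- which is a contradiction. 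Thus $P$ is a closed, hence compact, subspace of $wX\setminus X$ meeting $\mathrm{cl}_{wX}C$ for every component $C$ of $X$.

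I would then let $T$ be the quotient of $wX$ obtained by contracting $P$ to a point $p$, with quotient mapping $\phi:wX\to T$, and put $Y=X\cup\{p\}$ as a subspace of $T$. This is the one place the argument genuinely deviates from Theorem \ref{LRE}: $wX$ need not be Hausdorff (it is Hausdorff exactly when $X$ is normal), so $T$ cannot be declared compact Hausdorff. Instead one uses the elementary fact that contracting a closed subspace of a $T_1$-space yields a $T_1$-space --- for $z\in wX\setminus P$ the fibre $\phi^{-1}(\phi(z))=\{z\}$ is closed in $wX$, and $\phi^{-1}(p)=P$ is closed, so every singleton of $T$ is closed --- and hence $T$, and therefore its subspace $Y$, is $T_1$. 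Since $\phi(X)=X$ is dense in $T$ and $Y\setminus X=\{p\}$, the space $Y$ is a $T_1$ one-point extension of $X$, and it is connected by the same computation as before: for each component $C$ of $X$ we have $p=\phi(t_C)\in\phi(\mathrm{cl}_{wX}C)\subseteq\mathrm{cl}_TC$, so $p\in\mathrm{cl}_YC$, whence each $\mathrm{cl}_YC$ is a connected subspace containing $p$ and
\[Y=\bigcup\{\mathrm{cl}_YC:C\mbox{ is a component of }X\}\]
is connected.

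I do not anticipate a real obstacle: the construction and all the estimates are exactly those of Theorem \ref{LRE}, and the sole new ingredient is replacing ``contracting a compact set in a compact Hausdorff space stays compact Hausdorff'' by ``contracting a closed set in a $T_1$-space stays $T_1$.'' One should also note that, unlike Theorem \ref{LRE}, the assertion is one-directional: in the $T_1$ setting a compact component of $X$ need not be closed in a connectification $Y$, so its presence is not an obvious obstruction, and the argument above establishes only the stated implication.
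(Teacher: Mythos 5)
Your proposal is correct and follows essentially the same route as the paper: carry out the sufficiency argument of Theorem \ref{LRE} verbatim with $wX$ in place of $\beta X$, and replace the appeal to compact Hausdorffness of the quotient by the observation that contracting a closed subspace of a $T_1$-space yields a $T_1$-space. Your extra remark justifying that disjoint clopen subspaces have disjoint closures in $wX$ is a welcome (if optional) elaboration of a fact the paper simply records.
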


\begin{proof}
Let $X$ be a (non-empty) locally connected $T_1$-space with no compact component. Let $\delta X$, $t_C$ and $P$ be as defined in (Definition \ref{JAS} and) the proof of Theorem \ref{LRE} with $\beta X$ substituted by $wX$ in their definitions. As argued in the proof of Theorem \ref{LRE} it follows that $P$ is a non-empty closed subspace of $wX$ which misses $X$. Let $T$ be the quotient space of $wX$ which is obtained by contracting $P$ to a point $p$. Then $T$ is a $T_1$-space, as singletons are all closed in $T$. As argued in the proof of Theorem \ref{LRE} the subspace $Y=X\cup\{p\}$ of $T$ is a connected $T_1$ one-point extension of $X$.
\end{proof}

We do not know whether the converse of Theorem \ref{JKHG} holds true; we state this formally as an open question.

\begin{question}\label{KHF}
For a locally connected $T_1$-space does the existence of a $T_1$ one-point connectification imply the non-existence of compact components?
\end{question}

The next two theorems are dual to Theorems \ref{JGF} and \ref{RFS}, respectively. We omit the proofs, as they are analogous to the proofs we have already given for Theorems \ref{JGF} and \ref{RFS}, respectively (with the use of Theorem \ref{JKHG} in place of that of Theorem \ref{LRE}).

\begin{theorem}\label{JHFDF}
Let $X$ be a locally connected $T_1$-space with no compact component. Let $Y=X\cup\{p\}$ be the quotient space obtained by contracting a non-empty closed subspace of $wX$, which is contained in $wX\setminus X$ and intersects the closure in $wX$ of each component of $X$, to the point $p$. Then $Y$ is a $T_1$ one-point connectification of $X$.
\end{theorem}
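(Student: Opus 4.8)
The plan is to mimic the sufficiency argument of Theorem \ref{LRE}, carefully tracking where Hausdorffness of $\beta X$ was used and checking that the dual Wallman-compactification facts suffice to replace each such use. So first I would set $\delta X=\bigcup\{\mathrm{cl}_{wX}C:C\text{ a component of }X\}$ and, for each component $C$, pick $t_C\in\mathrm{cl}_{wX}C\setminus X$ (non-empty since $C$ is non-compact and $\mathrm{cl}_{wX}C$ is compact). Here $X$ being locally connected means each component $C$ is clopen in $X$, so $\mathrm{cl}_{wX}C$ is open in $wX$ by the quoted Wallman property; distinct components are disjoint clopen (hence disjoint zero-set) subspaces, so their closures in $wX$ are disjoint, again by the quoted property. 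Thus the verbatim argument of Theorem \ref{LRE} shows $P=\{t_C:C\text{ a component}\}\cup(wX\setminus\delta X)$ is closed in $wX$, non-empty, and disjoint from $X$.

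Next I would form $T$ by contracting the compact set $P\subseteq wX$ to a point $p$, with quotient map $\phi:wX\to T$. The point where more care is needed than in Section \ref{HGF} is that $T$ need not be Hausdorff; but we only need $Y=X\cup\{p\}$ to be $T_1$ and connected. For the $T_1$ property: points of $X$ are closed in $T$ because their preimages are singletons in $wX$ (a $T_1$ space), and $\{p\}$ is closed in $T$ because $\phi^{-1}(p)=P$ is closed in $wX$; so every singleton of $T$, hence of $Y$, is closed, i.e. $Y$ is $T_1$. Also $X$ is dense in $wX$ hence $\phi(X)=X$ is dense in $T$ and in $Y$, so $Y$ is a $T_1$ one-point extension of $X$.

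Finally, connectedness of $Y$ goes through exactly as in Theorem \ref{LRE}: for each component $C$ of $X$ we have $p=\phi(t_C)\in\phi(\mathrm{cl}_{wX}C)\subseteq\mathrm{cl}_T\phi(C)=\mathrm{cl}_TC$, so $p\in\mathrm{cl}_YC$; since $\mathrm{cl}_YC$ is the closure of the connected set $C$ it is connected, and $Y=\bigcup\{\mathrm{cl}_YC:C\text{ a component of }X\}$ is a union of connected subspaces all containing $p$, hence connected. This completes the proof.

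I expect the only real subtlety — the ``main obstacle'' — to be the bookkeeping of which properties of $\beta X$ are genuinely needed: the proof of Theorem \ref{LRE} uses (i) clopen subspaces have open closure, (ii) disjoint zero-sets have disjoint closures, and (iii) closure of a connected set is connected; the first two have been quoted as dual Wallman properties and the third is purely topological, while Hausdorffness of the big compactification is never used in the sufficiency direction. The genuinely new point, absent from Section \ref{HGF} because there $T$ was automatically Hausdorff, is the explicit verification that the quotient $T$ (equivalently $Y$) is $T_1$, which reduces to $\phi^{-1}(p)=P$ being closed and all other fibers being singletons.
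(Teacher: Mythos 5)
Your verification steps are all sound and are exactly the ones the paper intends (it omits the proof, referring back to the arguments for Theorems \ref{JGF} and \ref{JKHG}): the quotient $T$ of $wX$ by a closed set $P\subseteq wX\setminus X$ is $T_1$ because every fiber of $\phi$ is closed in $wX$; $Y=X\cup\{p\}$ is a dense-in-$T$, hence one-point, $T_1$ extension of $X$; and connectedness follows from $p\in\mathrm{cl}_YC$ for every component $C$ together with the fact that closures of connected sets are connected and the sets $\mathrm{cl}_YC$ cover $Y$ and share the point $p$. You are also right that Hausdorffness of the compactification is never used in this direction, and that the only properties needed, namely that clopen sets have open closures and disjoint zero-sets have disjoint closures, hold for $wX$.

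The genuine problem is that you have proved the wrong statement. Theorem \ref{JHFDF} is universally quantified: it asserts that for \emph{every} non-empty closed $P\subseteq wX\setminus X$ meeting $\mathrm{cl}_{wX}C$ for each component $C$, the resulting quotient is a $T_1$ one-point connectification. You instead construct one \emph{specific} such $P$ (the set $\{t_C:C\}\cup(wX\setminus\delta X)$ from the proof of Theorem \ref{LRE}) and verify the conclusion for it; that is precisely the proof of Theorem \ref{JKHG}, the existence statement, not of Theorem \ref{JHFDF}. The fix is short: take $P$ as given by the hypothesis (so its closedness, non-emptiness and disjointness from $X$ need no verification, and your entire first paragraph, including the $\delta X$ machinery and the check that $P$ is closed, becomes unnecessary), and for each component $C$ choose $t_C\in P\cap\mathrm{cl}_{wX}C$, which is non-empty by hypothesis rather than by a compactness argument. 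With that substitution your second and third paragraphs go through verbatim and do prove the theorem as stated.
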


\begin{question}\label{HJGDF}
For a locally connected $T_1$-space $X$ with no compact component, does Theorem \ref{JHFDF} give every $T_1$ one-point connectification of $X$?
\end{question}

\begin{theorem}\label{HJGFD}
Let $X$ be a locally connected $T_1$-space with no compact component. Let ${\mathscr P}$ be a closed hereditary finitely additive co-local topological property. If every component of $X$ has ${\mathscr P}$ (in particular, if $X$ has ${\mathscr P}$) then $X$ has a $T_1$ one-point connectification with ${\mathscr P}$.
\end{theorem}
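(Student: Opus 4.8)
The plan is to carry over the proof of Theorem \ref{RFS} almost verbatim, replacing the Stone--\v{C}ech compactification $\beta X$ by the Wallman compactification $wX$ throughout, and invoking Theorem \ref{JKHG} wherever the argument for Theorem \ref{RFS} invoked Theorem \ref{LRE}. First I would make the harmless reduction to the case in which every component of $X$ has ${\mathscr P}$: if $X$ itself has ${\mathscr P}$ then, since each component of $X$ is closed and ${\mathscr P}$ is closed hereditary, every component of $X$ has ${\mathscr P}$. Then I would take $\delta X$, the points $t_C$, the set $P=\{t_C:C\mbox{ is a component of }X\}\cup(wX\setminus\delta X)$, the quotient space $T$ obtained by contracting $P$ to a point $p$, its quotient mapping $\phi:wX\rightarrow T$, and the subspace $Y=X\cup\{p\}$ of $T$, exactly as in the proof of Theorem \ref{JKHG}. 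By that theorem, $Y$ is already a connected $T_1$ one-point extension of $X$, so the only thing left to prove is that $Y$ has ${\mathscr P}$.

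Since ${\mathscr P}$ is co-local, it suffices to exhibit an open base at $p$ in $Y$ all of whose complements in $Y$ have ${\mathscr P}$; I would in fact show that $Y\setminus U$ has ${\mathscr P}$ for \emph{every} open neighborhood $U$ of $p$ in $Y$. Given such a $U$, choose an open $U'\subseteq T$ with $U=U'\cap Y$. As $p\in U'$ and $wX\setminus\delta X\subseteq P=\phi^{-1}(p)\subseteq\phi^{-1}(U')$, we get $wX\setminus\phi^{-1}(U')\subseteq\delta X$. The set $wX\setminus\phi^{-1}(U')$ is closed in the compact space $wX$, hence compact, and it is covered by the family $\{\mathrm{cl}_{wX}C:C\mbox{ a component of }X\}$, each member of which is open in $wX$ because $C$ is clopen in $X$ (using that a clopen subspace of a $T_1$-space has open closure in $wX$). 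Extracting a finite subcover gives
\[wX\setminus\phi^{-1}(U')\subseteq\mathrm{cl}_{wX}C_1\cup\cdots\cup\mathrm{cl}_{wX}C_n,\]
and intersecting with $X$ yields $X\setminus U\subseteq C_1\cup\cdots\cup C_n=D$. Now $D$ is a finite disjoint union of closed subspaces each having ${\mathscr P}$, so $D$ has ${\mathscr P}$ by finite additivity, and $Y\setminus U=X\setminus U$ is closed in $X$, hence closed in $D$, so it has ${\mathscr P}$ by closed hereditariness. Thus $Y$ has ${\mathscr P}$.

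The only real point requiring care — and it is bookkeeping rather than a genuine difficulty — is to confirm that every property of $\beta X$ used in the proof of Theorem \ref{RFS} has an honest Wallman counterpart valid for merely $T_1$ (not necessarily normal) spaces: that $wX$ is compact, that a clopen subspace of $X$ has open closure in $wX$ (so that $\delta X$ is open in $wX$ and the finite-subcover step above is legitimate), and that disjoint clopen subspaces of $X$ have disjoint closures in $wX$ (needed inside the proof of Theorem \ref{JKHG} to see that $P$ is closed). All of these are recorded at the start of Section \ref{KJG}, and — crucially — Hausdorffness of the ambient compactification is used nowhere in the argument. Once these facts are in hand, the proof is word for word that of Theorem \ref{RFS}.
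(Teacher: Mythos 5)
Your proposal is correct and is exactly the argument the paper intends: the paper omits the proof of Theorem \ref{HJGFD} precisely because it is the proof of Theorem \ref{RFS} with $\beta X$ replaced by $wX$ and Theorem \ref{JKHG} used in place of Theorem \ref{LRE}, which is what you carry out. Your closing check that the needed facts (compactness of $wX$, open closures of clopen subspaces, disjoint closures of disjoint clopen subspaces) survive the passage to the Wallman compactification is the right point to verify, and it goes through as you say.
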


\end{document}